\def \zgran{\displaystyle}
\def \za{\alpha}
\def \zb{\beta}
\def \zg{\gamma}
\def \zd{\delta}
\def \zh{\theta}
\def \zl{\lambda}
\def \zm{\mu}
\def \zp{\pi}
\def \zr{\rho}
\def \zf{\varphi}
\def \zsu{\sum}
\def \zpor{\times}
\def \zmei{\leq}
\def \zmai{\geq}
\def \zco{\subset}
\def \zpe{\in}
\def \zpar{\partial}
\def \zfl{\rightarrow}
\def \zbv{\mid}
\def \z/{\over}
\def \zdp{\colon}
\def \zps{\dots}
\newcommand {\ccB}{\mathcal B}
\newcommand {\nnR}{\mathbb R}
\newcommand {\nnT}{\mathbb T}
\newcommand {\nnZ}{\mathbb Z}
\newtheorem*{theorem*}{Theorem}
\newtheorem{lemma}{Lemma}
\newtheorem{proposition}{Proposition}
\newtheorem{remark}{Remark}
\title{ On the rank of a product of manifolds}
\author{Francisco-Javier~Turiel}
\address[F.J.~Turiel]{
Geometr{\'\i}a y Topolog{\'\i}a,
Facultad de Ciencias, Campus de Teatinos, s/n,
29071-M{\'a}laga, Spain}
\email {turiel@uma.es}
\author{Arthur G.~Wasserman}
\address[A. G.~Wasserman]{
University of Michigan, Ann Arbor,
MI 48109-1003, USA}
\email{awass@umich.edu}
\thanks{The first author is partially supported by
MEC-FEDER grant MTM2013-41768-P, and JA grants FQM-213}
\begin{document}

\begin{abstract}
This note gives an example of closed smooth manifolds $M$ and $N$ for which 
the rank of $M\times N$ is strictly greater than $rank M + rank N$.
\end{abstract}

\maketitle

Milnor defined the {\it rank} of a smooth manifold $M$ as  the maximal number of   commuting vector fields 
on $M$ that are linearly independent at each point. 
 
One of the questions  raised by Milnor at the Seattle Topology Conference 
of 1963,  and echoed by Novikov \cite{No}, was 
 $$ is\  rank(M\zpor N)=rank(M)+rank(N)$$
whenever $M$ and $N$ are smooth closed manifolds? 

{\it In this note we give a negative answer to this question.}

We need a simple result about mapping tori.

Let $f\zdp X\zfl X$ be a diffeomorphism of a manifold X and let    
$$M(f)={\frac {I\zpor X} {(0,x)_{\widetilde~} (1,f(x))}} $$
be the mapping torus of f where $I=[0,1]$.  
  
Equivalently,   $M(f)=\zgran {\frac {\nnR\zpor X} {\nnZ}}$ where the action of 
$\nnZ$ on $\nnR\zpor X$ is given by $\za(k)(t,x)=(t+k,f^k (x))$.
$M(f)$ is a fibre bundle over $S^1$  with fiber $X$. We note that 
$\zp_1 (M(f))=\zp_1 (X)\ast_{f}\nnZ$ where $\ast$ denotes the semi-direct product and
$f_\ast\zdp\zp_1 (X)\zfl\zp_1 (X)$.

\begin{proposition} \label{P1}
Consider two periodic diffeomorphisms $f\zdp X\zfl X$ and $g\zdp Y\zfl Y$ with periods
$m$ and $n$ respectively.
Assume  $m$ and $n$ are  relatively prime, i.e., there are integers $c,d$ such that $mc+nd=1$.

Then $M(f) \zpor  M(g)$ is diffeomorphic to $ M(h)$ where 
$h\zdp S^1 \zpor X\zpor Y\zfl S^1 \zpor X\zpor Y$ is defined by $h(\zh,x ,y)=(\zh,f^{-d}(x ),g^c (y ))$.
Moreover $h^{m-n}=(id,f,g)$.
\end{proposition}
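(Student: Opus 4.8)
The plan is to present both sides as free $\nnZ^{2}$-quotients of $\nnR^{2}\zpor X\zpor Y$ and then change the lattice basis. From the presentation $M(f)=(\nnR\zpor X)/\nnZ$ and $M(g)=(\nnR\zpor Y)/\nnZ$ recalled above,
$$M(f)\zpor M(g)=(\nnR\zpor X\zpor\nnR\zpor Y)/\nnZ^{2},$$
where $(j,k)\zpe\nnZ^{2}$ acts by $(j,k)\zpu(s,x,t,y)=(s+j,f^{j}(x),t+k,g^{k}(y))$; this action is free and proper already on the $\nnR^{2}$-factor. Since $f^{m}=\mathrm{id}$ and $g^{n}=\mathrm{id}$, a vector $(w_{1},w_{2})\zpe\nnZ^{2}$ acts trivially on the $X\zpor Y$-factor exactly when $m\zbv w_{1}$ and $n\zbv w_{2}$; in particular $v:=(m,n)$ does, and since $\gcd(m,n)=1$ the vector $v$ is primitive, hence completes to a $\nnZ$-basis of $\nnZ^{2}$.

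I would complete it using $u:=(-d,c)$: the determinant of the matrix with rows $u,v$ is $(-d)n-cm=-(nd+mc)=-1$ by $mc+nd=1$, so $\{u,v\}$ is a $\nnZ$-basis. Introducing linear coordinates $(\zx,\zz)$ on the $\nnR^{2}$-factor by $(s,t)=\zx u+\zz v$, translation by $u$ becomes $\zx\mapsto\zx+1$ and translation by $v$ becomes $\zz\mapsto\zz+1$. Thus in the coordinates $(\zx,\zz,x,y)$ the generator $u$ acts by $(\zx,\zz,x,y)\mapsto(\zx+1,\zz,f^{-d}(x),g^{c}(y))$ and $v$ acts by $(\zx,\zz,x,y)\mapsto(\zx,\zz+1,f^{m}(x),g^{n}(y))=(\zx,\zz+1,x,y)$.

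Then I would divide in stages, using $\nnZ^{2}=\nnZ u\zdi\nnZ v$. Quotienting first by $\nnZ v$ merely turns the $\zz$-line $\nnR$ into the circle $S^{1}=\nnR/\nnZ$ (coordinate $\zh$) and leaves $\nnR_{\zx}\zpor S^{1}\zpor X\zpor Y$; the induced $\nnZ u$-action is generated by $(\zx,\zh,x,y)\mapsto(\zx+1,h(\zh,x,y))$ with $h(\zh,x,y)=(\zh,f^{-d}(x),g^{c}(y))$. Quotienting by this residual $\nnZ u$ is, by definition, the mapping torus $M(h)$, which gives the first assertion. For the last equality, $h$ fixes $\zh$ and $h^{k}(\zh,x,y)=(\zh,f^{-dk}(x),g^{ck}(y))$; taking $k=m-n$ one has $-d(m-n)\zeq dn\zeq 1\pmod m$ (since $nd\zeq 1\pmod m$) and $c(m-n)\zeq cm\zeq 1\pmod n$ (since $mc\zeq 1\pmod n$), both congruences coming from $mc+nd=1$, so $h^{m-n}=(\mathrm{id},f,g)$.

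I do not expect a genuine obstacle: the substance is the single observation that a product of mapping tori of periodic diffeomorphisms is a free $\nnZ^{2}$-quotient, and that such a quotient is a mapping torus as soon as one picks a primitive lattice vector acting trivially on the fibre. The only care needed is the bookkeeping of the change of basis — that $\det=\zmm1$ and that $u,v$ act as claimed — together with the two elementary congruences $nd\zeq1\pmod m$, $mc\zeq1\pmod n$; the write-up should be short.
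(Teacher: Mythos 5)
Your proof is correct and follows essentially the same route as the paper's: both identify $M(f)\times M(g)$ as the quotient of $\nnR^2\times X\times Y$ by the $\nnZ^2$-action, change the lattice basis to $\{(m,n),(-d,c)\}$ (determinant $\pm1$ from $mc+nd=1$), observe that $(m,n)$ acts trivially on the fibre so the quotient is the mapping torus of $h$, and verify $h^{m-n}=(id,f,g)$ by the same congruences modulo $m$ and $n$. Your write-up is only slightly more explicit about quotienting in stages and the primitivity of $(m,n)$; there is no substantive difference.
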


\begin{proof}
$M(f) \zpor  M(g)$ can be identified with the quotient of $\nnR^2 \zpor X\zpor Y$ under the action
of $\nnZ^2$ given by $\zb(z)(u,x,y)=(u+z, f^{z_1}(x), g^{z_2}(y ))$, where
$z=(z_1 ,z_2 )\zpe\nnZ^2$, $u=(u_1 ,u_2 )\zpe\nnR^2$ and $(x ,y)\zpe X\zpor Y$.

Set $\zl=(m,n)$ and $\zm=(-d,c)$. 
Since $mc+nd=1$, $\ccB=\{\zl,\zm\}$ is at the same time a basis of $\nnZ^2$ as a
$\nnZ$-module and a basis of $\nnR^2$ as a vector space. On the other hand
$$\zb(\zl)(u,x,y)=(u+\zl,x,y)\quad  \text{and}\quad  \zb(\zm)(u,x,y)=(u+\zm,f^{-d} (x ),g^c (y )).$$

Therefore the action $\zb$ referred to the new basis $\ccB$ of $\nnZ^2$ and $\nnR^2$ is
written now:
$$\zb(k,r)(a,b,x ,y)=(a+k,b+r,{\zf}^r (x ),{\zg}^r (y ))$$
where $\zf=f^{-d}$ and $\zg=g^c$. 

As the action of the first factor of $\nnZ^2$ on $X\zpor Y$ is trivial, identifying 
$S^1$ with $\zgran{\frac {\nnR} {\nnZ}}$ shows that $M(f) \zpor  M(g)$ is 
diffeomorphic to $M(h)$. 

Finally from $(-n)(-d)=1-cm$ and $cm=1-dn$ follows that $h^{m-n}=(id,f,g)$. 
\end{proof}

On the other hand:

\begin{lemma} \label{L1}
Let $f\zdp N\zfl N$ be a diffeomorphism and let $X_1 ,\dots,X_k$ be a family of commuting
vector fields on $N$ that are linearly independent everywhere. Assume 
$f_\ast X_i =\zsu_{j=1}^k a_{ij}X_j$, $i=1,\zps,k$, where the matrix 
$(a_{ij})\zpe GL(k,\nnR)$. Then $rank(M(f))\zmai k$.
\end{lemma}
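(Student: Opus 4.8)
The plan: realize $M(f)=(\nnR\zpor N)/\nnZ$ with $k\zpu(t,x)=(t+k,f^k(x))$, and build $k$ commuting everywhere‑independent vector fields on $M(f)$ by producing such a family on the cover $\nnR\zpor N$ that is invariant under the generating deck transformation $\za\zdp(t,x)\zta(t+1,f(x))$ (vector fields on $M(f)$ are exactly the invariant ones upstairs). Let $\tilde X_i(t,x)=(0,X_i(x))$ be the lifts of the $X_i$: they commute, are everywhere independent, and since $d\za=\mathrm{id}\oplus df$ we get $\za_*\tilde X_i=\zsu_j a_{ij}\tilde X_j$, so $\za_*$ induces a linear automorphism $B$ of $V:=\mathrm{span}(\tilde X_1,\dots,\tilde X_k)$ with $\det B=\det A\znoi 0$, $A=(a_{ij})$. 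A vector field $\zsu_j r_j(t)\tilde X_j$ with coefficients depending on $t$ only descends to $M(f)$ iff its coefficient vector $r(t)\zpe\nnR^k$ satisfies $r(t+1)=Br(t)$; a family of such fields is everywhere independent iff the corresponding coefficient vectors are linearly independent at each $t$; and any two such fields automatically commute, since the $\tilde X_j$ commute and the coefficients are functions of $t$ alone.

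\emph{Case $\det A>0$.} Then $B$ lies in the identity component $GL^{+}(V)$, so there is a smooth path $t\zta R(t)\zpe GL^{+}(V)$, $t\zpe[0,1]$, from $I$ to $B$ that is locally constant near $t=0$ and $t=1$. Propagating it over $\nnR$ by $R(t+1)=B\,R(t)$ yields a smooth $R\zdp\nnR\zta GL^{+}(V)$ (the endpoint conditions make the extension smooth across the integers), and its columns give $k$ fields that commute, are everywhere independent, and descend to $M(f)$. Hence $rank(M(f))\zmai k$.

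\emph{Case $\det A<0$.} A global frame $R(t)$ as above is impossible, since $\det R(t+1)=\det B\zpu\det R(t)$ would change sign; the fix uses the spare $\zpar_t$–direction of $M(f)$. As $\det B<0$, $B$ has a negative real eigenvalue, so $V$ has a $B$–invariant hyperplane $W$ with $\det(B|_W)>0$ (choose $W$ so that $B$ induces a negative eigenvalue $\zm$ on $V/W$; then $\det(B|_W)=\det B/\zm>0$). Pick a basis $Y_1,\dots,Y_{k-1}$ of $W$ and complete it to a basis $Y_1,\dots,Y_k$ of $V$. Running the argument of the previous case inside $W$ gives fields $V_\zlma=\zsu_{m=1}^{k-1}q_{\zlma m}(t)Y_m$, $\zlma=1,\dots,k-1$, with $Q(t)\zpe GL(k-1,\nnR)$; since over one period the frame picks up only a single factor of $B|_W$, we may take $Q$ locally constant outside arbitrarily small neighbourhoods $U$ of the half‑integers, with $\dot Q$ supported in $U$. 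For the last field put
$$V_k=p(t)\zpar_t+\zsu_{m=1}^{k-1}h_m(t)Y_m+g(t)Y_k,$$
where $p$ is smooth, $1$–periodic, and vanishes exactly on $U$ (positive elsewhere), while $g,h_1,\dots,h_{k-1}$ are chosen to make $V_k$ $\za$–invariant: invariance forces $g(t+1)=\zm\,g(t)$ together with a twisted‑periodic inhomogeneous equation for $(h_m)$ driven by $g$, and these can be solved smoothly with $g$ nowhere zero on $U$. Then $[V_\zlma,V_k]=-p(t)\zsu_m\dot q_{\zlma m}(t)Y_m\zeq 0$, because $\dot Q$ is supported where $p=0$; $V_k$ vanishes nowhere, since its $\zpar_t$–coefficient is nonzero off $U$ and its $Y_k$–coefficient $g$ is nonzero on $U$; and the $V_\zlma$ ($\zlma\zmei k-1$) still commute with each other and are fibrewise independent. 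So $V_1,\dots,V_k$ descend to $k$ commuting everywhere‑independent vector fields on $M(f)$, and again $rank(M(f))\zmai k$. When $A$ is diagonalizable — in particular when $f$ is periodic, as in the application of Proposition~\ref{P1} — one may take $Y_k$ to be a genuine eigenvector and $W$ the sum of the remaining eigenspaces, whereupon all $h_m$ vanish and $V_k=p(t)\zpar_t+g(t)Y_k$ with, e.g., $g(t)=|\zm|^{t}\sin(\zp t)$.

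The one substantial point is the case $\det A<0$: fibrewise frames are obstructed, so the extra field must borrow the $\zpar_t$–direction, and keeping $[V_\zlma,V_k]=0$ forces the ``winding'' of $Q(t)$ and the support of $p(t)$ to be disjoint. The residual tasks — the linear‑algebra choice of $W$, and solving the scalar equations for $g$ and the $h_m$ with the required non‑vanishing — are routine.
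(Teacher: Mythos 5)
Your argument is correct, and in the orientation-preserving case ($\det A>0$) it coincides with the paper's: both amount to choosing a path in $GL^{+}(k,\nnR)$ from the identity to the twisting matrix, locally constant near the endpoints, and using it as a $t$-dependent change of frame (you phrase this upstairs on $\nnR\zpor N$ via $\nnZ$-equivariance, the paper on $I\zpor N$ via boundary matching — an equivalent bookkeeping). Where you genuinely diverge is the case $\det A<0$. The paper's device is much lighter: on $[0,1/2]\zpor N$ it replaces $X_1$ by $\zr(t)X_1+(1-\zr^2(t))\zpar/\zpar t$ with $\zr$ going from $1$ to $-1$, i.e.\ it slides $X_1$ to $-X_1$ through the $\zpar_t$-direction while keeping independence from $X_2,\zps,X_k$; relative to the frame $\{-X_1,X_2,\zps,X_k\}$ the matrix of $f_\ast X_1,\zps,f_\ast X_k$ has positive determinant, so the positive case finishes the job on $[1/2,1]\zpor N$ — no spectral theory, no new equations. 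Your route instead isolates the obstruction structurally: you pass to a $B$-invariant hyperplane $W$ with $\det(B|_W)>0$ and negative quotient eigenvalue $\zm$ (correct: take the kernel of a $\zm$-eigencovector of the transpose), build the $(k-1)$-frame inside $W$, and let the last field borrow $\zpar_t$ with twisted-periodic coefficients, killing the brackets by making $\mathrm{supp}\,\dot Q$ and $\{p>0\}$ complementary. Your version costs more (the hyperplane lemma, the inhomogeneous equation for the $h_m$, the support-disjointness bookkeeping) but makes explicit \emph{why} a purely fibrewise frame is impossible when $\det A<0$ and how the $\zpar_t$-direction repairs it; the paper's version buys brevity and uniformity, since a single sign flip reduces everything to the positive case. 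One small precision point in your write-up: what you need is not merely that $V_k$ is nowhere zero but that it is nowhere in $\mathrm{span}(V_1,\zps,V_{k-1})=W$; your justification (the $\zpar_t$-coefficient and the $Y_k$-coefficient never vanish simultaneously, and both directions are transverse to $W$) does establish exactly this, so it is a phrasing slip rather than a gap. Similarly, make sure $p$ vanishes on a closed neighbourhood containing $\mathrm{supp}\,\dot Q$ while $g$ is nonzero there; with $g$ vanishing only near the integers, as in your explicit example, this is immediate.
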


\noindent{\bf Proof.} 
It suffices to construct $k$ commuting vector fields ${\widetilde X}_1 ,\zps,{\widetilde X}_k$ on 
$I\zpor N$ that are linearly independent at each point and such that every ${\widetilde X}_i (t,x)$ equals
$X_i (x)$ if $t$ is close to zero and $f_\ast X_i (x)$ when $t$ is close to 1 ($X_1 ,\dots,X_k$
are considered vector fields on $I\zpor N$ in the obvious way). 

If $\zbv a_{ij}\zbv>0$ consider an interval $[a,b]\zco (0,1)$ and a (differentiable) map
$(\zf_{ij} )\zdp I\zfl GL(k,\nnR)$ such that $\zf_{ij}([0,a])=\zd_{ij}$ and 
$\zf_{ij}([b,1])=a_{ij}$, and set ${\widetilde X}_i (t,x) =\zsu_{j=1}^k\zf_{ij}(t)X_j (x)$.

When $\zbv a_{ij}\zbv<0$ first take an interval $[c,d]\zco (0,1/2)$ and a function
$\zr\zdp [0,1/2]\zfl \nnR$ such that $\zr([0,c])=1$, $\zr([d,1/2])=-1$, and on
$[0,1/2]\zpor N$ set ${\widetilde X}_1 (t,x)=\zr(t)X_1 (x)+(1-\zr^2 (t)){\frac{\zpar} {\zpar t}}$
and ${\widetilde X}_i (t,x)=X_i (x)$, $i=2,\zps,k$.
 
The matrix of coordinates  of $f_\ast X_1 ,\zps,f_\ast X_k$ with respect to the basis 
$\{-X_1 ,X_2 ,\zps,X_k \}$  has positive determinant, so by doing as before we can extend
${\widetilde X}_1 ,\zps,{\widetilde X}_k$ to $[1/2,1]\zpor N$ by means of an interval
$[a,b]\zco (1/2,1)$ and a suitable map $(\zf_{ij} )\zdp [1/2,1]\zfl GL(k,\nnR)$. 
$\quad\square$

$\,$

Proposition \ref{P1} and Lemma \ref{L1} quickly yield a counterexample.

$\,$


Assume $X$ is a torus $\zgran\nnT^k ={\frac {\nnR^k} {\nnZ^k}}$ and $f$ is the map
induced by a nontrivial element of $GL(k,\nnZ)$. Then by the above lemma 
applied to ${\frac {\zpar} {\zpar\zh_j}}$, $j=1,\dots,k$, $rank(M(f))\zmai k$.
But $M(f)$ has non-abelian fundamental group, so it is not a torus and $rank(M(f))=k$. 
(If M is a closed connected n-manifold of rank n then M is diffeomorphic to the n-torus.)  

For the same reason if $Y=\nnT^r$ and $g$ is induced by a nontrivial element of
$GL(r,\nnZ)$ then $rank(M(g))=r$. 

If $f$ and $g$ are periodic with relatively
prime periods $m$ and $n$ respectively  then by Proposition \ref{P1} $M(f) \zpor  M(g)$ =$M(h)$ where
$h\zdp\nnT^{k+r+1}\zfl\nnT^{k+r+1}$ is induced by a nontrivial element of $GL(k+r+1,\nnZ)$. 
Moreover $rank(M(h))=k+r+1$ . Therefore:
$$rank(M(f)\zpor M(g))>rank(M(f))+rank(M(g)).$$

For instance, set $k=r=2$ and consider $f,g$ induced by the elements in 
$SL(2,\nnZ)\zco GL(2,\nnZ)$
\[\begin{pmatrix} -1 & 0 \\ 0&-1 \\ \end{pmatrix}
\quad\text{and}\quad
\begin{pmatrix} 0&1\\ -1&-1\\ \end{pmatrix}\]
respectively, so $M(f)$ and $M(g)$ are orientable. Then  the period of $f$ is 2 and that of $g$
equals 3 .

An even simpler but non-orientable counterexample can be constructed as follows.
Take $r$ and $g$ as before, $k=1$ and $f$ induced by $(-1)$.
Then $M(f)$ is the Klein bottle which has rank 1 and M(g) has rank 2; however, 
$M(f) \zpor  M(g)$  is diffeomorphic to $ M(h)$ and hence has rank 4.

$\,$

\begin{remark} \label{BB}
{\rm The {\it file} of a manifold $M$ was defined by Rosenberg \cite{Ro} to be the largest integer 
$k$ such that $\nnR^k$ acts locally free on $M$. When $M$ is closed $file(M)$ equals $rank(M)$ but 
$file(\nnR\zpor S^2 )=1$, \cite{Ro}, while $rank(\nnR\zpor S^2 )=3$.

The analog of Milnor's question for the file of a product of noncompact manifolds also fails. Indeed, 
let $\nnR_{e}^4$ be any exotic $\nnR^4$. Then $file(\nnR_{e}^4 )\zmei 3$
otherwise  $\nnR_{e}^4 =\nnR^4$. But $\nnR_{e}^4 \zpor\nnR=\nnR^5$
because there in no exotic $\nnR^5$, so
$file(\nnR_{e}^4 \zpor\nnR)=5>file(\nnR_{e}^4 )+file(\nnR)$.

Orientable closed connected $n$-manifolds of rank $n-1$ are completely 
described in \cite{RRW,CR,Ti}.}
\end{remark}



\begin{thebibliography}{99}


\bibitem{CR} G. Chatelet and H. Rosenberg, {\it  Manifolds which admit $\nnR^n$ actions}, 
Inst. Hautes \'Etudes Sci. Publ. Math. {\bf 43} (1974), 245--260.

\bibitem{No}  S. P. Novikov, {\it The Topology Summer Institute (Seattle, USA, 1963)}, 
Russian Math. Surveys {\bf 20} (1965), 145--167.
http://www.mi.ras.ru/$_{\widetilde~}$snovikov/16.pdf.

\bibitem{Ro}  H. Rosenberg,  
{\it Singularities of $\nnR^2$ actions}, 
Topology {\bf 7} (1968), 143--145.

\bibitem{RRW}  H. Rosenberg, R. Roussarie and  D. Weil, 
{\it A classification of closed oriented 3-manifold of rank two}, 
Ann. of Math. {\bf 91} (1970), 449--464.

\bibitem{Ti} D. Tischler, {\it Manifolds $M^n$ of rank $n-1$},
 Proc. Amer. Math. Soc.  {\bf 94} (1985),158--160.

\end{thebibliography}
\end{document}